%%%%%%%%%%%%%%%%%%%%%%% file template.tex %%%%%%%%%%%%%%%%%%%%%%%%%
%
% This is a general template file for the LaTeX package SVJour3
% for Springer journals.          Springer Heidelberg 2010/09/16
%
% Copy it to a new file with a new name and use it as the basis
% for your article. Delete % signs as needed.
%
% This template includes a few options for different layouts and
% content for various journals. Please consult a previous issue of
% your journal as needed.
%
%%%%%%%%%%%%%%%%%%%%%%%%%%%%%%%%%%%%%%%%%%%%%%%%%%%%%%%%%%%%%%%%%%%
%
% First comes an example EPS file -- just ignore it and
% proceed on the \documentclass line
% your LaTeX will extract the file if required
% [arxiv_v2: filecontents example.eps stripped, 188 chars]

% Add a serial/Oxford comma by default.

\renewcommand{\H}{\mathcal{H}}
\newcommand{\fF}{f|_F}

%%%
\RequirePackage{fix-cm}

\documentclass[smallextended,envcountsect,envcountreset]{svjour3}        % onecolumn (second format)
\smartqed  % flush right qed marks, e.g. at end of proof
\usepackage{graphicx}
\usepackage{amssymb,amsmath,amsfonts,geometry}
\usepackage{color,url}

%
% \usepackage{mathptmx}      % use Times fonts if available on your TeX system
%
% insert here the call for the packages your document requires
%\usepackage{latexsym}
% etc.
%
% please place your own definitions here and don't use \def but
% \newcommand{}{}
%
% Insert the name of "your journal" with
% \journalname{myjournal}
%
\begin{document}

\title{An enhanced Baillon-Haddad Theorem for Convex Functions defined on Convex Sets%\thanks{Grants or other notes
%about the article that should go on the front page should be
%placed here. General acknowledgments should be placed at the end of the article.}
}
%\subtitle{Do you have a subtitle?\\ If so, write it here}

\titlerunning{An enhanced Baillon-Haddad theorem for convex functions}        % if too long for running head

\author{ Pedro P\'erez-Aros \and Emilio Vilches \thanks{E. Vilches was partially funded by CONICYT Chile under grant Fondecyt de Iniciaci\'on 11180098, P. P\'erez-Aros was partially supported by CONICYT Chile under grant Fondecyt regular 1190110.}%etc.
}

%\authorrunning{Short form of author list} % if too long for running head

\institute{ 
	P. P\'erez-Aros \at
	Instituto de Ciencias de la Ingenier\'ia, Universidad de O'Higgins, Rancagua, Chile \\
	\email{pedro.perez@uoh.cl}           \and E. Vilches \at
	Instituto de Ciencias de la Educaci\'on \& Instituto de Ciencias de la Ingenier\'ia, Universidad de O'Higgins, Rancagua, Chile\\
	\email{emilio.vilches@uoh.cl}%  \\
	%             \emph{Present address:} of F. Author  %  if needed
}

\date{Received: date / Accepted: date}
% The correct dates will be entered by the editor

\maketitle

\begin{abstract}
The Baillon-Haddad theorem establishes that the gradient of a convex and continuously differentiable function defined in a Hilbert space is $\beta$-Lipschitz if and only if it is $1/\beta$-cocoercive. In this paper, we extend this theorem to G\^{a}teaux differentiable convex functions defined on an open convex set of a Hilbert space. Finally, we give a characterization of  $C^{1,+}$ convex functions in terms of local cocoercitivity. %Finally, we give an application to convex optimization through Tikhonov regularization of  a projected dynamical system.
 
\keywords{ Convex function \and cocoercivity \and Lipschitz function \and nonexpansive operator \and Baillon-Haddad Theorem}
% \PACS{PACS code1 \and PACS code2 \and more}
\subclass{47H05 \and 47H09  \and 47N10 \and 49J50\and 90C25}
\end{abstract}

\section{Introduction}
Let $\H$ be a Hilbert space endowed with a scalar product $\langle \cdot,\cdot \rangle$, induced norm $\Vert \cdot \Vert$ and unit ball $\mathbb{B}$. Given a nonempty {open convex} set $\Omega\subset \H$ and $\beta>0$, we say that an operator $T\colon \Omega \to \H$ is $1/\beta$-cocoercive if for all $x,y\in \Omega$

\begin{equation}\label{eq.1}
\beta \langle Tx-Ty,x-y\rangle \geq \Vert Tx-Ty\Vert^2,
\end{equation}
and $T$ is $\beta$-Lipschitz continuous if for all $x,y\in \Omega$
\begin{equation}\label{eq.2}
\Vert Tx-Ty\Vert \leq \beta \Vert x-y\Vert. 
\end{equation} 

If $\beta=1$, then \eqref{eq.1} means that $T$ is firmly nonexpansive and \eqref{eq.2} that $T$ is nonexpansive (see, e.g., \cite[Chapter~4]{BC2017}). It is clear that  \eqref{eq.1} implies  \eqref{eq.2}, while the converse, in general, is false even for monotone operators (take for example $T\colon \mathbb{R}^2\to \mathbb{R}^2$ given by $T(x,y)=(-y,x)$). Despite of this negative result, the Baillon-Haddad theorem (\cite[Corollaire~10]{Baillon1977}) states that if $T$ is the gradient of a convex function, then \eqref{eq.1} and \eqref{eq.2} are equivalent. The precise statement is the following:
\begin{theorem}[Baillon-Haddad]\label{BH}
	Let $f\colon \H\to \mathbb{R}$ be convex, Fr\'echet differentiable on $\H$, and such that $\nabla f$ is $\beta$-Lipschitz continuous for some $\beta>0$. Then $\nabla f$ is $1/\beta$-cocoercive.
\end{theorem}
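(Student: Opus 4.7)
The plan is to reduce to the case $\beta = 1$ by considering $\tilde f = f/\beta$, whose gradient is $1$-Lipschitz, and then deriving the cocoercivity inequality from a suitable descent inequality for convex functions with Lipschitz gradient.

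The first step is to establish the so-called \emph{descent lemma}: if $g\colon \H\to\mathbb{R}$ is convex with $1$-Lipschitz gradient, then for all $x,y\in\H$,
\begin{equation*}
g(y) \le g(x) + \langle \nabla g(x), y-x\rangle + \tfrac{1}{2}\|y-x\|^2.
\end{equation*}
I would prove this by writing $g(y)-g(x)=\int_0^1 \langle \nabla g(x+t(y-x)),y-x\rangle\,dt$ and applying the $1$-Lipschitz estimate on $\nabla g$ together with the Cauchy--Schwarz inequality. In Hilbert space this integration is justified because $t\mapsto g(x+t(y-x))$ is a $C^1$ function of $t$ with derivative $\langle\nabla g(x+t(y-x)),y-x\rangle$.

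Next, fix $x,y\in\H$ and introduce the auxiliary function $\varphi_y(z):=\tilde f(z)-\langle \nabla \tilde f(y),z\rangle$. Then $\varphi_y$ is convex, $\nabla\varphi_y = \nabla\tilde f - \nabla\tilde f(y)$ is $1$-Lipschitz, and $y$ is a global minimizer of $\varphi_y$ since $\nabla \varphi_y(y)=0$. Applying the descent lemma to $\varphi_y$ at the point $x$ with displacement $-\nabla\varphi_y(x)$ gives
\begin{equation*}
\varphi_y(y)\le \varphi_y\bigl(x-\nabla\varphi_y(x)\bigr) \le \varphi_y(x) - \tfrac{1}{2}\|\nabla \tilde f(x)-\nabla \tilde f(y)\|^2.
\end{equation*}
Expanding the definition of $\varphi_y$ yields
\begin{equation*}
\tilde f(y)-\tilde f(x) \le \langle \nabla \tilde f(y),y-x\rangle - \tfrac{1}{2}\|\nabla \tilde f(x)-\nabla \tilde f(y)\|^2,
\end{equation*}
and by symmetry (swapping the roles of $x$ and $y$) the analogous bound with $\nabla \tilde f(x)$ holds. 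Adding the two inequalities and multiplying back by $\beta$ produces the cocoercivity inequality $\langle \nabla f(x)-\nabla f(y),x-y\rangle \ge \tfrac{1}{\beta}\|\nabla f(x)-\nabla f(y)\|^2$.

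The main technical point is the descent lemma; once it is in hand, the rest is the elegant trick of applying it to the convex function $\varphi_y$, whose minimizer is known explicitly. In the present Hilbert-space, full-domain setting, the line-integration argument is straightforward, so I do not expect any genuine obstacle. The nontrivial extension, which presumably motivates the paper, will be to adapt this scheme when $f$ is only defined on an open convex set $\Omega$, since then $x-\nabla\varphi_y(x)$ need not lie in $\Omega$ and the descent step has to be replaced by a local argument.
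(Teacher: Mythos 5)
Your proof is correct: the descent lemma is valid (the line-integration is justified exactly as you say, and in fact convexity is not even needed for that step), the function $\varphi_y$ is convex with $1$-Lipschitz gradient and global minimizer $y$, the evaluation at $x-\nabla\varphi_y(x)$ gives the bound $\tilde f(y)-\tilde f(x)\le\langle\nabla\tilde f(y),y-x\rangle-\tfrac12\|\nabla\tilde f(x)-\nabla\tilde f(y)\|^2$, and summing with the symmetric inequality and rescaling by $\beta$ yields $1/\beta$-cocoercivity as defined in the paper. Note, however, that the paper does not prove this statement at all: Theorem~\ref{BH} is quoted from Baillon--Haddad with a citation, and the paper's actual work goes into the generalization (Theorem~\ref{infinite}) by an entirely different route --- a characterization via convexity of $x\mapsto\frac{\beta}{2}\|x\|^2-f(x)$, proved in finite dimensions through the generalized Hessian $\overline{\nabla}^2 f$ and Rademacher's theorem, and then lifted to Hilbert space by restriction to finite-dimensional subspaces. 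Your argument is the classical, elementary, self-contained proof on the full space; the paper's machinery is built precisely to survive the obstruction you correctly identify at the end, namely that on an open convex $\Omega$ the point $x-\nabla\varphi_y(x)$ may leave the domain (in Lemma~\ref{finite} the authors indeed replace your full descent step by the short step $z=x+t(\nabla f(y)-\nabla f(x))$ with $t$ small, which is the local version of your trick), and also to handle mere G\^ateaux differentiability rather than an a priori Lipschitz gradient.
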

This prominent result provides an important link between convex optimization and fixed-point iteration \cite{Byrne}. Moreover, it has many applications in optimization and numerical functional analysis (see, e.g.,  \cite{Briceno2010,BC2017,Combettes2004,Tseng1991,Zhu-Marcotte}).  An improved version of  Theorem \ref{BH} appeared in \cite{BC2010} (see also \cite[Theorem~1.2]{Byrne}), where the authors relate the Lipschitzianity of  the gradients of a convex function with the convexity and Moreau envelopes of associated functions (see \cite[Theorem~2.1]{BC2010}). Furthermore, they provided the following Baillon-Haddad theorem for twice continuously differentiable convex functions defined on open convex sets.
\begin{theorem}{\cite[Theorem~3.3]{BC2010}}\label{Teo3.3}
	Let $\Omega$ be a nonempty open convex subset of $\H$, let $f\colon \Omega \to \mathbb{R}$ be convex and twice continuously Fr\'echet differentiable on $\Omega$, and let $\beta>0$. Then $\nabla f$ is $\beta$-Lipschitz continuous if and only if it is $1/\beta$-cocoercive.
\end{theorem}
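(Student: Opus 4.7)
The reverse implication is essentially free: applying the Cauchy--Schwarz inequality to \eqref{eq.1} gives $\|\nabla f(x)-\nabla f(y)\|^2 \le \beta\,\|\nabla f(x)-\nabla f(y)\|\,\|x-y\|$, which immediately yields \eqref{eq.2} and uses no regularity assumption on $f$ beyond differentiability.

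For the nontrivial direction my plan is to exploit the $C^2$ regularity to move the gradient estimate down to the Hessian level, and then recover cocoercivity by integration along segments inside $\Omega$. Writing $H(z):=\nabla^2 f(z)$, convexity of $f$ gives $H(z)\succeq 0$, while the $\beta$-Lipschitz continuity of $\nabla f$ gives $\langle H(z)v,v\rangle \le \beta\|v\|^2$ by differentiating $t\mapsto \langle \nabla f(z+tv)-\nabla f(z),v\rangle$ at $t=0$. Since $H(z)$ is self-adjoint with spectrum in $[0,\beta]$, the spectral theorem (or elementary functional calculus) then supplies the key pointwise operator inequality $H(z)^2 \preceq \beta\,H(z)$ on $\H$.

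Next I fix $x,y\in\Omega$ and parametrize the segment by $\gamma(t):=y+t(x-y)$, which stays inside $\Omega$ by convexity. The fundamental theorem of calculus gives the representation $\nabla f(x)-\nabla f(y)=\int_0^1 H(\gamma(t))(x-y)\,dt$. Taking squared norms and applying Jensen's inequality for $\|\cdot\|^2$ (equivalently, Cauchy--Schwarz in $L^2([0,1];\H)$), followed by the pointwise bound $\|H(\gamma(t))(x-y)\|^2=\langle H(\gamma(t))^2(x-y),x-y\rangle \le \beta\,\langle H(\gamma(t))(x-y),x-y\rangle$, yields
\[
\|\nabla f(x)-\nabla f(y)\|^2 \le \beta \int_0^1 \langle H(\gamma(t))(x-y),x-y\rangle\,dt = \beta\,\langle \nabla f(x)-\nabla f(y),x-y\rangle,
\]
which is exactly $1/\beta$-cocoercivity.

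The main conceptual obstacle is that the classical Baillon--Haddad argument on all of $\H$---whether one formulates it via Fenchel conjugacy or via the descent lemma applied at the auxiliary point $x-\beta^{-1}(\nabla f(x)-\nabla f(y))$---breaks down here because that auxiliary point need not lie in $\Omega$. Twice-continuous differentiability is the decisive hypothesis precisely because it allows the whole argument to be carried out infinitesimally: the operator inequality $H(z)^2\preceq \beta H(z)$ is purely pointwise in $\Omega$, and the subsequent integration is only ever performed along line segments that convexity of $\Omega$ guarantees to stay inside the open set, so one never has to leave the domain.
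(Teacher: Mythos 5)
Your argument is correct, and it is genuinely different from the route taken in the paper. Note first that the paper does not reprove Theorem~\ref{Teo3.3} at all: it quotes it from [BC2010] and then supersedes it with Theorem~\ref{infinite}, whose finite-dimensional engine is Lemma~\ref{lemma1}. There the pivot is the intermediate condition that $x\mapsto \frac{\beta}{2}\Vert x\Vert^2-f(x)$ be convex: Lipschitz continuity of $\nabla f$ is translated into the spectral bound $0\preceq A\preceq \beta I$ for every (generalized) Hessian $A$ via Proposition~\ref{convex-char}, and cocoercivity is then recovered \emph{algebraically}, by observing that $0\preceq A\preceq\beta I$ is equivalent to $\Vert 2A/\beta-I\Vert\leq 1$, i.e.\ to $\frac{2}{\beta}\nabla f-\operatorname{Id}$ being nonexpansive, which is cocoercivity of $\nabla f$ by Example~\ref{ejemplo}(ii). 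You reach the same pointwise spectral bound $0\preceq H(z)\preceq\beta I$ (your derivation of the upper bound by differentiating $t\mapsto\langle\nabla f(z+tv)-\nabla f(z),v\rangle$ is fine), but you then exploit it \emph{analytically}: the functional-calculus inequality $H(z)^2\preceq\beta H(z)$ combined with the integral representation $\nabla f(x)-\nabla f(y)=\int_0^1 H(\gamma(t))(x-y)\,dt$ and Jensen's inequality gives \eqref{eq.1} directly, with no detour through the auxiliary convex function or the $2T-\operatorname{Id}$ reformulation. Both proofs are sound for the $C^2$ setting; your closing remark correctly identifies why the classical descent-lemma proof fails on $\Omega\subsetneq\H$ (the test point $x-\beta^{-1}(\nabla f(x)-\nabla f(y))$ may escape $\Omega$), whereas your segment integration, like the paper's Hessian localization, never leaves $\Omega$. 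The trade-off is that your argument leans on $C^2$ smoothness (continuity of $H$ along the segment, the Bochner integral representation), while the paper's algebraic route is precisely what survives the weakening to generalized Hessians of $C^{1,+}$ functions in Lemma~\ref{lemma1} and hence to the merely G\^{a}teaux differentiable case of Theorem~\ref{infinite}.
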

Finally, the authors left as an open question the validity of  Theorem \ref{Teo3.3} for simple G\^{a}teaux differentiable convex functions instead of twice continuously differentiable (see \cite[Remark~3.5]{BC2010}).

The aim of this paper is to extend  Theorem \ref{Teo3.3} to merely G\^{a}teaux differentiable convex functions (see Theorem \ref{infinite}). To do that, we first establish the result in finite-dimensions and then we use a finite dimensional reduction.

We emphasize that extend  Theorem \ref{Teo3.3} is of interest because it provides an important link between the gradient of convex functions defined on convex sets and cocoercive operators defined on convex sets.

Cocoercivity arises in various areas of optimization and nonlinear analysis  (see, e.g., \cite{Attouch2015,BC2017,Bot2017,Contreras2018}).
In particular, it plays an important role in the design of algorithms to solve structured monotone inclusions (which includes fixed points of non-expansive operators). Indeed, let us consider the structured monotone inclusion: find $x\in \H$ such that
\begin{equation}\label{inclusion-monotona}
0\in \partial \Phi(x)+Bx,
\end{equation}
where $\Phi\colon \H \to \mathbb{R}\cup \{+\infty\}$ is a {proper} convex lower semicontinuous function and $B\colon \H \to \H$ is a monotone operator. It is well known that (see, e.g., \cite{Attouch2015}) the problem \eqref{inclusion-monotona} is equivalent to the fixed point problem: find $x\in \H$ such that
\begin{equation}\label{fixed-point}
x=\operatorname{prox}_{\mu \Phi}\left(x-\mu Bx\right),
\end{equation}
where $\mu>0$ and $\operatorname{prox}_{\mu \Phi}\colon \H \to \H$ is the proximal mapping of $\Phi$  (see, e.g., \cite[Definition~12.23]{BC2017}) defined by 
$$
\operatorname{prox}_{\mu \Phi}(x):=\operatorname{argmin}_{y\in \H}\left\{ \Phi (y)+\frac{1}{2\mu} \Vert y-x\Vert^2\right\}. 
$$
To solve the fixed point problem \eqref{fixed-point},  Abbas and Attouch  \cite{Attouch2015} introduces the following dynamical system
\begin{equation}\label{Dyn}
\begin{aligned}
\dot{x}(t)&+x(t)=\operatorname{prox}_{\mu \Phi}\left(x(t)-\mu Bx(t)\right),\\
x(0)&=x_0,
\end{aligned}
\end{equation}
whose equilibrium points are solutions of \eqref{fixed-point}. They proved the following result (see \cite[Theorem~5.2]{Attouch2015})
\begin{proposition}
	Let $\Phi\colon \H \to \mathbb{R}\cup \{+\infty\}$ be a convex lower semicontinuous proper function, and $B$ a maximal monotone operator which is $\beta$-cocoercive {for some $\beta>0$}. Suppose that $\mu \in (0,2\beta)$ and $$\operatorname{zer}\left(\partial \Phi +B\right):=\{z\in \H \colon 0\in \partial \Phi(z)+Bz\}\neq \emptyset.$$  Then the unique solution of \eqref{Dyn} weakly converges to some element in  $\operatorname{zer}\left(\partial \Phi +B\right)$.
\end{proposition}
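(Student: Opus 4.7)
The plan is to recast the dynamical system \eqref{Dyn} as a continuous-time Krasnoselskii--Mann iteration driven by the operator $T := \operatorname{prox}_{\mu\Phi}\circ(I - \mu B)$, and then conclude via Opial's lemma. The first step is to identify $\operatorname{Fix}(T)$ with $\operatorname{zer}(\partial\Phi + B)$; this is immediate from the standard characterization $y = \operatorname{prox}_{\mu\Phi}(z) \iff (z - y)/\mu \in \partial\Phi(y)$ applied with $z = x - \mu B x$. Next I would show that the hypothesis $\mu \in (0, 2\beta)$ makes $T$ an $\alpha$-averaged operator for some $\alpha \in (0,1)$: $\beta$-cocoercivity of $B$ renders $I - \mu B$ averaged precisely in this range, $\operatorname{prox}_{\mu\Phi}$ is firmly nonexpansive (i.e.\ $1/2$-averaged), and the composition of averaged operators is averaged (see, e.g., \cite[Chapter~4]{BC2017}).

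With $T$ nonexpansive, the vector field $F(x) = T(x) - x$ is globally Lipschitz on $\H$, so the Cauchy--Lipschitz theorem yields a unique global $C^1$ trajectory $x\colon [0,\infty) \to \H$. The key energy estimate comes from the fact that for any $\alpha$-averaged $T$, the operator $I - T$ is $(2\alpha)^{-1}$-cocoercive, which applied at $z \in \operatorname{Fix}(T)$ gives
$$
\langle x(t) - T(x(t)),\, x(t) - z\rangle \geq \frac{1}{2\alpha}\|x(t) - T(x(t))\|^2.
$$
Combined with $\dot x(t) = T(x(t)) - x(t)$ this yields
$$
\frac{d}{dt}\tfrac{1}{2}\|x(t) - z\|^2 \leq -\frac{1}{2\alpha}\|\dot x(t)\|^2 \leq 0,
$$
so $t \mapsto \|x(t) - z\|$ is non-increasing (hence convergent) for every $z \in \operatorname{Fix}(T)$, the orbit is bounded, and $\dot x \in L^2(0,\infty;\H)$.

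The Opial argument then reduces to showing that every weak sequential cluster point of $x(t)$ as $t \to \infty$ lies in $\operatorname{Fix}(T)$. I expect this to be the main obstacle: $L^2$-integrability of $\dot x$ does not on its own force $\dot x(t) \to 0$. A clean route is to establish that $t \mapsto \|\dot x(t)\|$ is non-increasing, using the monotonicity of $I - T$ and differentiating $\|\dot x(t)\|^2$ along the flow (formally $\tfrac{d}{dt}\|\dot x\|^2 = -2\langle \tfrac{d}{dt}(I-T)(x),\, \dot x\rangle \leq 0$); alternatively one exploits that $\dot x$ is uniformly continuous (Lipschitzness of $T-I$ together with boundedness of the orbit) and invokes a Barbalat-type argument to conclude $\|\dot x(t)\| \to 0$. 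Once asymptotic regularity is secured, the demiclosedness of the continuous monotone operator $I - T$ at $0$ ensures that any weak limit $\bar x$ of a sequence $x(t_n)$ with $t_n \to \infty$ satisfies $\bar x \in \operatorname{Fix}(T) = \operatorname{zer}(\partial\Phi + B)$, and Opial's lemma then delivers weak convergence of the entire trajectory to a point of $\operatorname{zer}(\partial\Phi + B)$.
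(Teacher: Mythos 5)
The paper does not actually prove this proposition: it is quoted as a known result from Abbas and Attouch \cite[Theorem~5.2]{Attouch2015}, so there is no internal proof to compare against. Your argument is correct and is essentially the standard one from that literature (recast \eqref{Dyn} as $\dot x = Tx - x$ with $T=\operatorname{prox}_{\mu\Phi}\circ(I-\mu B)$ averaged for $\mu\in(0,2\beta)$, identify $\operatorname{Fix}(T)=\operatorname{zer}(\partial\Phi+B)$, derive the Fej\'er-type energy decay, and conclude by demiclosedness plus the continuous Opial lemma). The one step you flag as the main obstacle, asymptotic regularity, does go through by either of your suggested routes; the cleanest is the first: since $I-T$ is monotone and Lipschitz, applying $\frac{d}{dt}\|x(t)-y(t)\|^2\le 0$ to the pair of solutions $x(\cdot)$ and $y(\cdot):=x(\cdot+h)$ shows $t\mapsto\|x(t+h)-x(t)\|$ is nonincreasing, hence so is $t\mapsto\|\dot x(t)\|$ after dividing by $h$ and letting $h\downarrow 0$, and a nonincreasing function in $L^2(0,\infty)$ must tend to zero. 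With that secured, Browder's demiclosedness principle and Opial's lemma finish the proof exactly as you describe.
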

The previous result was extended by Bo\c{t} and Csetnek (see \cite[Theorem~12]{Bot2017}) to solve the monotone inclusion: find $x\in \H$ such that
\begin{equation}\label{maximal-inclusion}
0\in Ax+Bx,
\end{equation}
where $A\colon \H \rightrightarrows \H$ is a maximal monotone operator and $B\colon \H \to \H$ is $\beta$-cocoercive.  
It is important to emphasize that in order to solve the problems \eqref{fixed-point} and \eqref{maximal-inclusion}, it is enough that the operator $B$ is defined in $\operatorname{dom}\partial \Phi$ and $\operatorname{dom}A$, respectively. Therefore, it is interesting to have characterizations of cocoercive operators defined on open convex subsets of $\H$. Thus, it is important to extend  Theorem \ref{Teo3.3} to merely G\^{a}teaux differentiable functions (see \cite[Remark~3.5]{BC2010}).

\noindent The paper is organized as follows. After some preliminaries, in  Section \ref{mainresult} we state and prove the main result of the paper, that is, we prove the Baillon-Haddad theorem for G\^{a}teaux differentiable convex functions defined on open convex sets of arbitrary Hilbert spaces (see Theorem \ref{infinite}). Next, we give a characterization of $C^{1,+}$ convex functions in terms of local cocoercitivity (see Corollary \ref{CorollaryC1}). %Then, in Section \ref{application} we give an application to convex optimization. 
The paper ends with conclusions and final remarks.

\section{Notation and Preliminaries}

%Let $\H$ be a Hilbert space endowed with a scalar product $\langle \cdot,\cdot \rangle$, induced norm $\Vert \cdot \Vert$ and unit ball $\mathbb{B}$. 

%We denote by $\mathcal{L}(H,H)$ the set of continuous linear operators from $\H$ into $\H$. The norm of an operator  $A\in \mathcal{L}(H,H)$ is defined by
%$$
%\Vert A \Vert_{\mathcal{L}(H,H)}:=\sup_{h\in \mathbb{B}}\Vert Ah\Vert.
%$$
Given an open convex set $\Omega\subset \H$, we denote by $C^{1,+}(\Omega)$ the class of Fr\'echet differentiable functions $f\colon \Omega\subset \H \to \mathbb{R}$ whose gradient $\nabla f$ is locally Lipschitz (see, e.g., \cite[Chapter~9]{Rockafellar}).

%Given $\Omega\subset \H$ and $\beta>0$, we say that $T\colon \Omega\to \H$ is $\beta$-cocoercive on $\Omega$ if %for all $x,y\in \Omega$
%\begin{equation*}
%\beta \langle Tx-Ty,x-y\rangle \geq \Vert Tx-Ty\Vert^2.
%\end{equation*}
\begin{example}\label{ejemplo}The following list provides some examples of cocoercive operators (we refer to \cite[Chapter~4]{BC2017} for further properties on cocoercive operators):
	\begin{enumerate}
		\item[(i)]   $T\colon \Omega\to \H$ is nonexpansive if and only if $I-T$ is $1/2$-cocoercive.
		\item[(ii)]\label{2T-char}  $T\colon \Omega\to \H$ is $1$-cocoercive if and only if $2T-I$  is $1$-Lipschitz.
		\item[(iii)] A matrix $M$ is psd-plus (that is, $M=E^t AE$ for $E$ any matrix and $A$ positive definite) if and only if the mapping $x\mapsto Mx$ is cocoercive (see \cite[Proposition~2.5]{Zhu-Marcotte}).
		\item[(iv)] The Yosida approximation $A_{\lambda}:=\frac{1}{\lambda}(\operatorname{Id}-(\operatorname{Id}+A)^{-1})$ of a maximal monotone operator $A\colon \H \rightrightarrows \H$ is $\lambda$-cocoercive (see \cite[Corollary~23.11]{BC2017}).
	\end{enumerate}
\end{example}
For a convex function $f\colon \Omega \subset \H \to \mathbb{R}$, we consider the convex subdifferential of $f$ at $x\in \Omega$ as 
{
$$
\partial f(x):=\{ x^*\in \H \colon f(x)+\langle x^*,y-x\rangle \leq  f(y) \textrm { for all } y\in \Omega \}.
$$}
It is well-known that for two functions $f, g\colon \Omega \subset \H \to \mathbb{R}$ the following equality holds (see, e.g., \cite[Corollary~16.48]{BC2017}):
\begin{equation}\label{suma}
\begin{aligned}
\partial (f+g)(x)&=\partial f(x)+\partial g(x) & \textrm{ for all } x\in \Omega.
\end{aligned}
\end{equation}

To prove our main result, we will use finite dimensional  reduction arguments, thus, some elements of generalized differentiation in finite dimensions will be needed. We refer to  \cite{Rockafellar} for more details.

%Let $\Omega\subset \mathbb{R}^n$ be a open set and $F\colon \Omega\subset \mathbb{R}^n \to \mathbb{R}^m$ be a locally Lipschitz map. For $\bar{x}\in \Omega$, we define the Generalized %Jacobian of $f$ at $\bar{x}$ as
%\begin{equation*}
%\overline{\nabla}F(\bar{x}):=\{ A\in \mathbb{R}^{m\times n} \mid \exists x_n \to \bar{x} \textrm{ with } x_n\in D, \nabla F(x_n)\to A\},
%\end{equation*}
%where $D\subset \Omega$ is the dense set of points where $F$ is differentiable (by virtue of Rademacher's theorem the set $D$ exists). The following result (see \cite[Theorem~9.62]%{Rockafellar}) gives some properties of the set of matrices $\overline{\nabla}F(\bar{x})$.

%\begin{proposition}
% Let $F\colon \Omega\subset \mathbb{R}^n \to \mathbb{R}^m$ be locally Lipschitz, where $\Omega\subset \mathbb{R}^n$ is a open set. Then $\overline{\nabla}F(\bar{x})$ is a nonempty, compact set of matrices, and for every $w\in \mathbb{R}^n$ and $y\in \mathbb{R}^m$ one has
%$$
%\max\{ \langle y,w\rangle \mid  v\in D^{*} F(\bar{x})(w)\}=\max\{\langle y,Aw\rangle \mid A \in \overline{\nabla} F(\bar{x})\},
%$$
%where $D^{*} F(\bar{x})$ denotes the coderivative of $F$ at $\bar{x}$.
%\end{proposition}

\noindent Let  $f\colon \Omega\subset \mathbb{R}^n \to \mathbb{R}$ be a $C^{1,+}(\Omega)$ function. For $\bar{x}\in \Omega$, we define the Generalized Hessian of $f$ at $\bar{x}$ (see, e.g., \cite[Theorem~9.62]{Rockafellar} and \cite{HU1984}) as the set of matrices 
\begin{equation*}
\overline{\nabla}^2 f(\bar{x}):=\{A\in \mathbb{R}^{n\times n} \mid \exists x_n\to \bar{x}, x_n\in D, \nabla^2 f(x_n)\to A\},
\end{equation*}
where $D\subset \Omega$ is the dense set of points where $f$ is twice differentiable (by virtue of Rademacher's theorem the set $D$ exists).  
The following result (see \cite[Theorem~13.52]{Rockafellar}) establishes some properties of the Generalized Hessian $\overline{\nabla}^2f(\bar{x})$.
\begin{proposition}Let $f\colon \Omega\subset \mathbb{R}^n \to \mathbb{R}$ be a $C^{1,+}(\Omega)$ function, where $\Omega\subset \mathbb{R}^n$ is an open set. Then $\overline{\nabla}^2 f(\bar{x})$ is a nonempty, compact set of symmetric matrices.
\end{proposition}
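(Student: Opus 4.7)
The plan is to verify each of the three conclusions (nonemptiness, symmetry, compactness) directly from the definition of $\overline{\nabla}^2 f(\bar{x})$, using Rademacher's theorem together with the local Lipschitz continuity of $\nabla f$ as the two main ingredients. Let $L>0$ and $r>0$ be such that $\nabla f$ is $L$-Lipschitz on the open ball $B(\bar{x},r)\subset \Omega$; such constants exist because $f\in C^{1,+}(\Omega)$.

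For \textbf{nonemptiness}, I would invoke Rademacher's theorem to obtain a dense subset $D\subset B(\bar{x},r)$ on which $\nabla f$ is differentiable, hence on which $f$ is twice differentiable. Picking any sequence $x_n\in D$ with $x_n\to \bar{x}$, each Hessian $\nabla^2 f(x_n)$ is well-defined, and since $\nabla f$ is $L$-Lipschitz on $B(\bar{x},r)$, the operator norms $\|\nabla^2 f(x_n)\|$ are bounded by $L$. By the Bolzano-Weierstrass theorem in $\mathbb{R}^{n\times n}$, some subsequence converges to a matrix $A$, showing $A\in \overline{\nabla}^2 f(\bar{x})$.

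For \textbf{symmetry}, at each point $x_n\in D$ the function $f$ is twice differentiable, so by Schwarz's theorem $\nabla^2 f(x_n)$ is a symmetric matrix; symmetry is preserved under limits, so every element of $\overline{\nabla}^2 f(\bar{x})$ is symmetric. For \textbf{boundedness}, the same Lipschitz estimate shows $\|A\|\le L$ for every $A\in \overline{\nabla}^2 f(\bar{x})$. For \textbf{closedness}, suppose $A_k \to A$ with each $A_k\in \overline{\nabla}^2 f(\bar{x})$; for each $k$ choose $y_k\in D$ with $\|y_k-\bar{x}\|<1/k$ and $\|\nabla^2 f(y_k)-A_k\|<1/k$, which is possible by the definition of $A_k$ as a limit. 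Then $y_k\to \bar{x}$ and $\nabla^2 f(y_k)\to A$, so $A\in \overline{\nabla}^2 f(\bar{x})$.

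The main (and only real) subtlety is the \textbf{diagonal extraction} used to prove closedness: one must be careful that the two-parameter convergence $x_n^k\to \bar{x}$ and $\nabla^2 f(x_n^k)\to A_k$ can be replaced by a single sequence approximating $A$. The shortcut above bypasses the need for a full diagonal procedure by extracting one representative $y_k$ at scale $1/k$ for each $k$. Everything else is an immediate consequence of Rademacher's theorem, the Lipschitz bound, and Schwarz's theorem.
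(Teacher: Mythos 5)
Your proof is correct. Note that the paper does not prove this proposition itself: it is quoted directly from Rockafellar and Wets (Theorem 13.52), so there is no in-paper argument to compare against; your direct verification from the definition is essentially the standard one underlying that reference. All the steps are sound: nonemptiness via density of $D$ plus Bolzano--Weierstrass, the bound $\Vert \nabla^2 f(x)\Vert \le L$ for $x\in D$ near $\bar{x}$ coming from the local Lipschitz constant of $\nabla f$ (the derivative of an $L$-Lipschitz map, where it exists, has operator norm at most $L$), and closedness via the $1/k$-representative extraction, which correctly avoids a full diagonal argument. The one place where you should be more careful with attribution is the symmetry step: the classical Schwarz/Clairaut theorem assumes continuity of the second partial derivatives, which is not available here. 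What you actually need is the refinement (due to Young) that if $f$ is differentiable on a neighborhood of $x$ and $\nabla f$ is differentiable at $x$ --- which is precisely what Rademacher's theorem applied to the locally Lipschitz map $\nabla f$ guarantees at each point of $D$ --- then the second differential at $x$ is symmetric. With that citation adjusted, the argument is complete.
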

%Moreover, according to \cite[Theorem~9.62]{Rockafellar}, it is clear that $\overline{\nabla}^2 {f}(\bar{x})=\overline{\nabla}F(\bar{x})$ for $F=\nabla f$.
The following result gives a known characterization of convexity  and Lipschitzianity of functions (see, e.g., \cite{Rockafellar,HU1984}). We give a proof for completeness.
\begin{proposition}\label{convex-char}
	Let $f\colon \Omega\to \mathbb{R}$ be a $C^{1,+}(\Omega)$ function with $\Omega\subset \mathbb{R}^n$ { open and }convex. Then 
	\begin{enumerate}
		\item[(i)] $f$ is convex if and only if for all $x\in \Omega$ and all $A\in \overline{\nabla}^2{f}(x)$ one has
		\begin{equation*}
		\left\langle Au,u\right\rangle \geq 0 \textrm{ for all } u\in \mathbb{R}^n.
		\end{equation*} 
		\item[(ii)] $\nabla f$ is $1$-Lipschitz on $\Omega$ if and only if for all $x\in \Omega$ and all $A\in \overline{\nabla}^2 {f}(x)$ the inequality $\Vert A\Vert \leq 1$ holds.
	\end{enumerate}
\end{proposition}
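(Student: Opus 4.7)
\emph{Proof plan.} My plan is to handle both parts in parallel, treating the ``only if'' directions by direct inspection at points of twice differentiability and the converses by a mollification argument. For the forward implications, I first observe that at every $z\in D$ the constant sequence shows $\nabla^2 f(z) \in \overline{\nabla}^2 f(z)$. If $f$ is convex, the classical $C^2$ criterion forces $\langle \nabla^2 f(z)u,u\rangle \geq 0$ for every $u\in \mathbb{R}^n$; if $\nabla f$ is $1$-Lipschitz, then $\nabla^2 f(z)$, being the Jacobian at $z$ of a $1$-Lipschitz map, has operator norm at most $1$. Both conditions cut out closed subsets of the space of symmetric matrices and therefore pass to any limit $A = \lim_n \nabla^2 f(x_n) \in \overline{\nabla}^2 f(\bar{x})$.

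For the converses I would rely on convolution smoothing. Let $\rho_\epsilon$ be a standard nonnegative $C^\infty$ mollifier supported in $\epsilon \mathbb{B}$, set $\Omega_\epsilon := \{x \in \Omega : \operatorname{dist}(x, \mathbb{R}^n\setminus\Omega) > \epsilon\}$, and define $f_\epsilon := f\ast \rho_\epsilon \in C^\infty(\Omega_\epsilon)$. Local Lipschitzianity of $\nabla f$ supplies an $L^\infty_{\mathrm{loc}}$ bound on the almost-everywhere-defined Hessian $\nabla^2 f$, which legitimizes differentiation under the integral sign and yields
\begin{equation*}
\nabla^2 f_\epsilon(x) = \int \nabla^2 f(x-y)\,\rho_\epsilon(y)\, dy, \qquad x\in \Omega_\epsilon.
\end{equation*}
Under the hypothesis of (i) (resp.\ (ii)), the integrand is almost everywhere positive semidefinite (resp.\ has operator norm at most $1$), because $\nabla^2 f(z)\in \overline{\nabla}^2 f(z)$ for every $z\in D$. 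Averaging against the probability density $\rho_\epsilon$ preserves both properties, so $\nabla^2 f_\epsilon$ satisfies the corresponding pointwise condition on all of $\Omega_\epsilon$, and the classical $C^2$ results deliver convexity of $f_\epsilon$, respectively $1$-Lipschitzianity of $\nabla f_\epsilon$.

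Finally I would pass to the limit $\epsilon\to 0^+$: since $f\in C^{1,+}(\Omega)$, $f_\epsilon\to f$ and $\nabla f_\epsilon\to \nabla f$ locally uniformly on $\Omega$, and both convexity and a fixed Lipschitz constant survive such limits, giving the conclusion on all of $\Omega$. The main delicate point I expect is the treatment of the fact that $\nabla^2 f$ exists only almost everywhere: the integral formula for $\nabla^2 f_\epsilon$ must be justified via dominated convergence, and the transfer of the hypothesis on $\overline{\nabla}^2 f$ to an almost-everywhere statement about $\nabla^2 f$ hinges on the elementary remark $\nabla^2 f(z)\in \overline{\nabla}^2 f(z)$ for $z\in D$. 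Once these points are handled, the rest of the argument is routine.
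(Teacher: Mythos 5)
Your argument is correct, but it follows a genuinely different route from the paper's. For part (i) the paper simply invokes the generalized-Hessian characterization of convexity from Hiriart-Urruty--Strodiot--Nguyen (their Example~2.2), and for the sufficiency in part (ii) it scalarizes: it fixes $y\in\mathbb{B}$, forms $g_y(x)=\langle\nabla f(x),y\rangle$, bounds the generalized gradient $\overline{\nabla}g_y$ by $\Vert y\Vert$, applies a nonsmooth mean value theorem (Mordukhovich, Theorem~3.5.2) to get that $g_y$ is $\Vert y\Vert$-Lipschitz, and concludes via Rockafellar--Wets, Exercise~9.9. You instead prove the forward implications by noting that the PSD and norm-$\leq 1$ conditions hold at every point of twice differentiability and are closed under the matrix limits defining $\overline{\nabla}^2 f(\bar{x})$, and you prove the converses by mollification: the a.e.\ Hessian inherits the hypothesis because $\nabla^2 f(z)\in\overline{\nabla}^2 f(z)$ for $z\in D$, averaging against $\rho_\epsilon$ preserves positive semidefiniteness and the operator-norm bound, the classical $C^2$ criteria apply to $f_\epsilon$ on the (still convex) erosion $\Omega_\epsilon$, and both properties survive the locally uniform convergence $f_\epsilon\to f$, $\nabla f_\epsilon\to\nabla f$. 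Your approach is more self-contained --- it needs only Rademacher's theorem and standard facts about convolution of $W^{1,\infty}_{\mathrm{loc}}$ functions, rather than the imported results on generalized gradients and nonsmooth mean value theorems --- and it treats (i) and (ii) uniformly; the price is the need to justify the identity $\nabla^2 f_\epsilon=(\nabla^2 f)\ast\rho_\epsilon$, which you correctly flag and which is standard since the distributional Hessian of a $C^{1,+}$ function coincides with its a.e.\ Hessian. The only cosmetic caveat is that your appeal to ``the classical $C^2$ criterion'' at points of $D$ should be read as the pointwise argument (monotonicity of $\nabla f$, respectively the difference quotient of a $1$-Lipschitz map, evaluated at a point where $\nabla f$ is differentiable), since $f$ itself need not be $C^2$ near those points; this is immediate and does not affect the validity of the proof.
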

\begin{proof}
	\noindent $(i)$ follows from \cite[Example~2.2]{HU1984}. The necessity in $(ii)$ is direct. To prove the sufficiency in $(ii)$, assume that for all $x\in \Omega$ and all $A\in \overline{\nabla}^2 {f}(\bar{x})$ the inequality $\Vert A\Vert \leq 1$ holds. 
	Fix $y\in \mathbb{B}$ and consider the function $g_y(x):=\langle \nabla f(x),y\rangle$. Then, $g_y$ is locally Lipschitz on $\Omega$ {(because $f\in C^{1,+}(\Omega)$) and by virtue of Rademacher's theorem, the following formula holds:
	\begin{equation*}
	\begin{aligned}
	\nabla g_y(x)&=\langle \nabla^2 f(x),y\rangle & \textrm{ a.e. } x\in \Omega.
	\end{aligned}
	\end{equation*} }
	Thus, 
	\begin{equation*}
	\begin{aligned}
	\sup_{w \in \overline{\nabla } g_y(x)}\Vert w\Vert  &\leq  \Vert y\Vert  & \textrm{ for all } x\in \Omega,
	\end{aligned}
	\end{equation*}
	where $ \overline{\nabla } g_y(x)$ denotes the generalized gradient of $g_y$ (see \cite[Theorem~9,61]{Rockafellar}).	Hence, according to \cite[Theorem~3.5.2]{Mordukhovich2006}, the map $g_y$ is $\Vert y\Vert$-Lipschitz on $\Omega$. Finally, by virtue of \cite[Exercise~9.9]{Rockafellar}, we conclude that $\nabla f$ is $1$-Lipschitz on $\Omega$
\end{proof}

\section{An enhanced Baillon-Haddad theorem}\label{mainresult}

In this section, we state and prove the main result of the paper, that is, the Baillon-Haddad theorem for convex functions defined on convex sets, which extends \cite[Theorem~3.3]{BC2010} and solves the question posed in \cite[Remark~3.5]{BC2010}.
\begin{theorem}\label{infinite}
	Let $\Omega$ be a nonempty open convex subset of a Hilbert space $\H$, let $f\colon \Omega\to \mathbb{R}$ be a convex function and $\beta \in\,  ]0,+\infty[$.
	Then the following are equivalent.
	\begin{enumerate}
		\item[(a)] $f$ is G\^{a}teaux differentiable on $\Omega$ and $\nabla f$ is $\beta$-Lipschitz continuous on $\Omega$.
		\item[(b)] the map $x\mapsto \frac{\beta}{2}\Vert x\Vert^2-f(x)$ is convex on $\Omega$.
		\item[(c)] $f$ is G\^{a}teaux differentiable on $\Omega$ and $\nabla f$ is $1/\beta$-cocoercive.
	\end{enumerate}
	Moreover, if any of the above conditions holds, then $f\in C^{1,+}(\Omega)$.
\end{theorem}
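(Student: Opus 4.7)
The plan is to establish the cycle $(a) \Rightarrow (b) \Rightarrow (c) \Rightarrow (a)$, proving the two nontrivial implications first in finite dimensions (using Proposition~\ref{convex-char} and Theorem~\ref{Teo3.3} as a black box on smooth approximants) and then lifting each direction to a general Hilbert space by a finite-dimensional reduction. The closing step $(c) \Rightarrow (a)$ is immediate in either setting: combining the cocoercivity inequality with Cauchy-Schwarz gives $\Vert \nabla f(x) - \nabla f(y)\Vert \leq \beta \Vert x - y\Vert$.

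In finite dimensions, condition $(a)$ forces $f \in C^{1,+}(\Omega)$, because G\^{a}teaux differentiability with Lipschitz gradient upgrades to Fr\'echet there; thus Proposition~\ref{convex-char} applies, and convexity of $f$ together with the bound $\Vert A\Vert \leq \beta$ translates to $0 \preceq A \preceq \beta I$ for every $A \in \overline{\nabla}^2 f(x)$. Since $\overline{\nabla}^2 g(x) = \beta I - \overline{\nabla}^2 f(x)$ for $g(x) := \tfrac{\beta}{2}\Vert x\Vert^2 - f(x)$, this is equivalent to $\overline{\nabla}^2 g(x)$ lying in the positive semidefinite cone, i.e.\ to convexity of $g$ by Proposition~\ref{convex-char}(i). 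This proves $(a)\Rightarrow(b)$ in finite dimensions, and the same argument runs in reverse whenever $f$ is a priori of class $C^{1,+}$.

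For $(b) \Rightarrow (c)$ in finite dimensions, I would first extract G\^{a}teaux differentiability from the subdifferential sum rule \eqref{suma}: applied to $f + g = \tfrac{\beta}{2}\Vert \cdot \Vert^2$, it forces $\partial f(x) + \partial g(x) = \{\beta x\}$, so both subdifferentials must be singletons and $f$ is differentiable with continuous gradient. I would then convolve with a symmetric smooth mollifier $\rho_\epsilon$: on $\Omega_\epsilon := \{x \in \Omega : d(x, \partial\Omega) > \epsilon\}$, the regularization $f_\epsilon := f * \rho_\epsilon$ is $C^\infty$, convex, and still satisfies (b) (because $\tfrac{\beta}{2}\Vert \cdot\Vert^2 - f_\epsilon$ equals the convex function $g * \rho_\epsilon$ up to an additive constant, using symmetry of $\rho_\epsilon$). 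Applying the previous paragraph to the smooth $f_\epsilon$ yields that $\nabla f_\epsilon$ is $\beta$-Lipschitz on $\Omega_\epsilon$, so Theorem~\ref{Teo3.3} delivers $1/\beta$-cocoercivity of $\nabla f_\epsilon$; letting $\epsilon \to 0^+$ and using pointwise convergence $\nabla f_\epsilon(x) \to \nabla f(x)$ (valid by continuity of $\nabla f$) transfers cocoercivity to $\nabla f$.

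To pass to a general Hilbert space, I exploit that both convexity and cocoercivity can be tested on finite-dimensional affine sections. For $(a) \Rightarrow (b)$: convexity of $g$ on $\Omega$ is equivalent to convexity along every line segment of $\Omega$, and such a restriction inherits $(a)$, so the one-dimensional finite-dim case suffices. For $(b) \Rightarrow (c)$: the singleton subdifferential argument gives G\^{a}teaux differentiability of $f$ directly in the Hilbert setting, and then for arbitrary $x, y \in \Omega$ I set $E := \operatorname{span}\{x - y,\, \nabla f(x) - \nabla f(y)\}$ and restrict $f$ to the open convex slice $\Omega_E := (x + E) \cap \Omega$ of the affine space $x + E$. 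The restriction is convex, inherits (b), and has gradient $P_E \nabla f$ with $P_E$ the orthogonal projection onto $E$; the finite-dim $(b) \Rightarrow (c)$ gives cocoercivity of $P_E \nabla f$, and since both $x - y$ and $\nabla f(x) - \nabla f(y)$ already lie in $E$, the projection acts as the identity on the vectors entering the cocoercivity inequality, recovering (c) for $f$ at $(x,y)$. The "moreover" assertion is then immediate: in a Hilbert space, G\^{a}teaux differentiability of a convex function combined with continuity of $\nabla f$ upgrades to Fr\'echet differentiability, and global Lipschitzness is in particular local Lipschitzness, so $f \in C^{1,+}(\Omega)$. The step I expect to be most delicate is $(b) \Rightarrow (c)$: condition (b) supplies no a~priori differentiability of $f$ (hence the preliminary sum rule step), and the classical descent-lemma route to cocoercivity uses the step $x - \tfrac{1}{\beta}\nabla f(x)$, which may exit $\Omega$; the mollification device is precisely what sidesteps this domain obstruction before Theorem~\ref{Teo3.3} is invoked on the smooth approximants.
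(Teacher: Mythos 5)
Your argument is correct, and it reaches the same destination by a route that differs from the paper's in two substantive places, both worth noting. First, for the delicate finite-dimensional step ``(b) without a priori smoothness,'' the paper's Lemma~\ref{finite} does not mollify: after extracting G\^ateaux differentiability from the sum rule \eqref{suma} exactly as you do, it runs a Bregman-distance/descent-lemma argument, taking the trial point $z=x+t(\nabla f(y)-\nabla f(x))$ with $t$ small enough to stay inside $\Omega$, which yields \emph{local} cocoercivity and hence $f\in C^{1,+}(\Omega)$ directly; the equivalences are then obtained self-containedly in Lemma~\ref{lemma1} via the generalized Hessian, Proposition~\ref{convex-char} and the $2T-I$ characterization of Example~\ref{ejemplo}(ii), without ever invoking Theorem~\ref{Teo3.3}. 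Your mollification device ($f_\epsilon:=f*\rho_\epsilon$ on the erosion $\Omega_\epsilon$, using symmetry of $\rho_\epsilon$ to preserve condition~(b) up to a constant, then citing Theorem~\ref{Teo3.3} on the smooth approximants and passing to the limit) is a legitimate shortcut that outsources the cocoercivity step to the known $C^2$ result; what it gives up is the local-cocoercivity information that the paper's descent-lemma argument produces as a byproduct and later reuses for Corollary~\ref{CorollaryC1}. Second, in the lift to infinite dimensions, the paper recovers $\Vert\nabla f(x)-\nabla f(y)\Vert_{\H}$ by taking a supremum over \emph{all} finite-dimensional subspaces containing $x,y$ (equation \eqref{asterisco2}), whereas you sidestep the supremum entirely by placing $\nabla f(x)-\nabla f(y)$ into the test subspace $E$ from the outset, so that the orthogonal projection acts as the identity on every vector appearing in the cocoercivity inequality; this is cleaner, and it is available to you because the sum-rule step has already secured the existence of $\nabla f$ before $E$ is chosen. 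The remaining components --- the cycle structure with $(c)\Rightarrow(a)$ by Cauchy--Schwarz, the line-restriction argument for $(a)\Rightarrow(b)$, and the \v{S}mulian upgrade to Fr\'echet differentiability for the final assertion --- match the paper's proof in substance.
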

 A straightforward consequence of the above result is given by the following characterization of being $C^{1,+}(\Omega)$. {
\begin{corollary}\label{CorollaryC1}
		Let $\Omega$ be a nonempty open convex subset of a Hilbert space $\H$ and let $f\colon \Omega\to \mathbb{R}$ be a convex function. Then, the function $f$ is $C^{1,+}(\Omega)$ if and only if $\nabla f$ is locally cocoercive, that is, for every $x\in  \Omega$ there exists a neighborhood $U$ of $x$ and a constant $\beta$ such that $\nabla f$ is $\beta$-cocoercive on $U$. 
	\end{corollary}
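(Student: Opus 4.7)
The plan is to deduce the corollary as a local application of Theorem \ref{infinite}. The key observation is that $\Omega$ is open, so around each point one can find an open ball entirely contained in $\Omega$; such a ball is both open and convex, which is exactly the setting in which the equivalence (a) $\Leftrightarrow$ (c) of Theorem \ref{infinite} applies. Hence the proof reduces to localizing the appropriate hypothesis to such a ball and transporting the conclusion back.

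For the direct implication, I would fix $x\in\Omega$. Since $f\in C^{1,+}(\Omega)$, $\nabla f$ is locally Lipschitz, so there exist $r>0$ and $\beta>0$ with $U:=B(x,r)\subset\Omega$ and $\nabla f$ being $\beta$-Lipschitz on $U$. The set $U$ is open convex and $f|_U$ is convex and G\^{a}teaux differentiable, so condition (a) of Theorem \ref{infinite} is satisfied on $U$. The equivalence with (c) then yields that $\nabla f$ is $(1/\beta)$-cocoercive on $U$, establishing local cocoercivity.

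For the reverse implication, I would fix $x\in\Omega$ and, by local cocoercivity, choose a neighborhood of $x$ on which $\nabla f$ exists and is cocoercive with some constant; shrinking if necessary, take this neighborhood to be an open ball $U\subset\Omega$. Condition (c) of Theorem \ref{infinite} then holds on $U$, and the equivalence with (a), together with the `moreover' clause, gives that $\nabla f$ is Lipschitz on $U$ and $f|_U\in C^{1,+}(U)$. Since $x$ was arbitrary, $\nabla f$ is locally Lipschitz on all of $\Omega$, which is precisely $f\in C^{1,+}(\Omega)$.

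There is no real obstacle here; the result is a clean localization of Theorem \ref{infinite}. The only point worth flagging is that the hypothesis ``locally cocoercive'' implicitly requires $\nabla f$ to exist on the neighborhood $U$ (otherwise cocoercivity would not be meaningful), which is exactly what is needed to invoke condition (c) of the theorem on each ball.
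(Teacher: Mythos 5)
Your proposal is correct and is exactly the localization of Theorem \ref{infinite} that the paper has in mind when it calls the corollary ``a straightforward consequence'' of that theorem: restrict to an open ball around each point, apply the equivalence $(a)\Leftrightarrow(c)$ (together with the ``moreover'' clause for the reverse direction), and use that both local Lipschitzianity and local cocoercivity are local properties. Your closing remark that the phrase ``locally cocoercive'' presupposes the existence of $\nabla f$ on the neighborhood, which is what licenses invoking condition $(c)$, is a worthwhile clarification.
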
}
Before presenting the proof of Theorem \ref{infinite}, we illustrate our results with the following example.
\begin{example}\label{Example31}
Let us consider the convex function  $f: (-4,4) \to \mathbb{R}$ defined by
	\begin{equation*}
	f(x)=\left\{
	\begin{array}{cl}
\frac{1}{8}x^{3/2} + \frac{4}{4-x}, & \text{ if } x\in [0,4),\\
\frac{1}{4-x} , & \text{ if } x\in (-4,0).
	\end{array} \right.
	\end{equation*}
\begin{figure}[h!!]
\centering\includegraphics[scale=0.8]{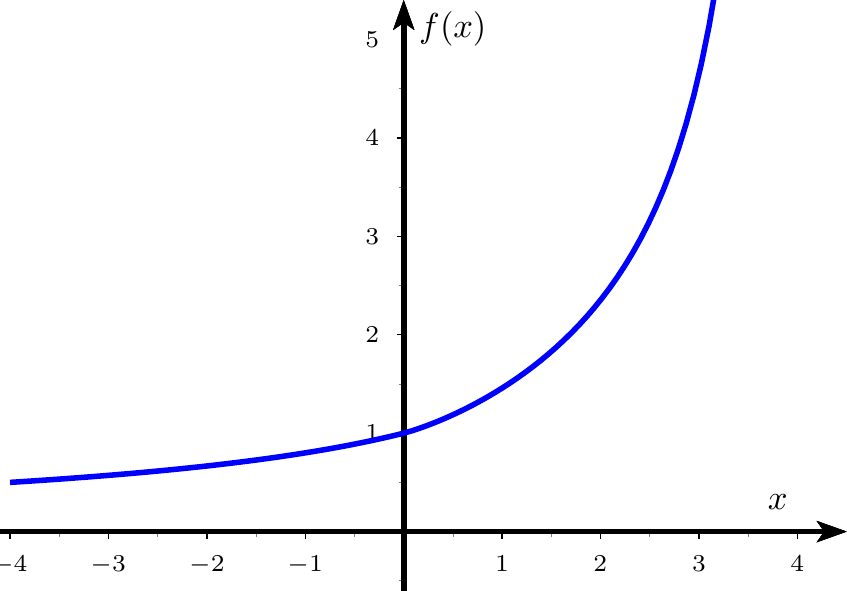}\caption{The function $f\colon (-4,4)\to \mathbb{R}$ from Example \ref{Example31}.}
\end{figure}	
\newline
It is clear that $f$ is $C^1$ over $\Omega = (-4,4)$, but not $C^2$. Indeed, 
\begin{align*}
	f'(x)=\left\{
	\begin{array}{cl}
\frac{3}{16}x^{1/2} + \frac{4}{(4-x)^2}, & \text{ if } x\in [0,4),\\
	\frac{4}{(4-x)^2} , & \text{ if } x\in (-4,0).
\end{array} \right.
\end{align*}
Moreover, $f$ cannot be extended to a differentiable function over the whole space $\mathbb{R}$. Consequently, the classical Baillon-Haddad theorem (Theorem \ref{BH}) and its extension for twice continuously differentiable functions (Theorem \ref{Teo3.3}) cannot be applied to $f$. However, by virtue of  Corollary \ref{CorollaryC1}, we know that the gradient of $f$ is locally cocoercive. Thus, due to Theorem \ref{infinite},  for every $\alpha  \in (0,4)$  the function $f$ is $\beta(\alpha)$-cocoercive on $(-\alpha, \alpha)$ with $\beta(\alpha)=1/f^{\prime}(\alpha)$.
\end{example}

To prove Theorem \ref{infinite}, we show first the result in finite dimension under the additional assumption that $f\in C^{1,+}(\Omega)$ (see the next lemma). Then, we obtain Theorem \ref{infinite} in finite dimensional spaces (see Lemma \ref{finite}). Finally, the proof of Theorem \ref{infinite} follows from finite dimensional reductions and Lemma \ref{finite}.
\begin{lemma}\label{lemma1}
	Let $\Omega$ be a nonempty open convex subset of $\mathbb{R}^n$, let $f\colon \Omega\to \mathbb{R}$ be a $C^{1,+}(\Omega)$ convex function and $\beta \in\, ]0,+\infty[$.
	Then the following are equivalent.
	\begin{enumerate}
		\item[(a)] $\nabla f$ is $\beta$-Lipschitz continuous on $\Omega$.
		\item[(b)] the map $x\mapsto \frac{\beta}{2}\Vert x\Vert^2-f(x)$ is convex on $\Omega$.
		\item[(c)] $\nabla f$ is $1/\beta$-cocoercive.
	\end{enumerate}
\end{lemma}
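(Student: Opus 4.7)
The plan is to establish the cyclic implications $(c)\Rightarrow(a)\Leftrightarrow(b)$ together with $(a)\Rightarrow(c)$. The step $(c)\Rightarrow(a)$ I would dispose of immediately: Cauchy--Schwarz applied to the $1/\beta$-cocoercivity inequality yields $\|\nabla f(x)-\nabla f(y)\|^2\leq \beta\|\nabla f(x)-\nabla f(y)\|\,\|x-y\|$, whence the $\beta$-Lipschitz bound. The remaining two implications carry the substance of the argument.

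For $(a)\Leftrightarrow(b)$ I would apply Proposition~\ref{convex-char} to both $f$ and to $g(x):=\frac{\beta}{2}\|x\|^2-f(x)$. Since the quadratic $\frac{\beta}{2}\|\cdot\|^2$ is smooth with constant Hessian $\beta I$, one has $g\in C^{1,+}(\Omega)$ with $\overline{\nabla}^2 g(x)=\beta I-\overline{\nabla}^2 f(x)$ at every $x\in \Omega$. Proposition~\ref{convex-char}(i) then turns convexity of $g$ into the condition $\beta I-A\succeq 0$ for every $A\in\overline{\nabla}^2 f(x)$ and every $x\in \Omega$, while the assumed convexity of $f$ gives $A\succeq 0$ for the same matrices. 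As each such $A$ is symmetric, having both $A\succeq 0$ and $\beta I-A\succeq 0$ is equivalent to the operator-norm bound $\|A\|\leq \beta$, and this bound is in turn equivalent to $(a)$ by applying Proposition~\ref{convex-char}(ii) to $f/\beta$.

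For $(a)\Rightarrow(c)$ the plan is to reduce to the twice continuously differentiable case already covered by Theorem~\ref{Teo3.3} via standard mollification. I would fix a smooth nonnegative radial mollifier $\rho_\epsilon$ supported in $\epsilon\mathbb{B}$, set $\Omega_\epsilon:=\{x\in \Omega:\operatorname{dist}(x,\partial\Omega)>\epsilon\}$, and define $f_\epsilon:=f\ast \rho_\epsilon$ on $\Omega_\epsilon$. Then $f_\epsilon\in C^\infty(\Omega_\epsilon)$ is convex (convolution of a convex function with a nonnegative kernel), and $\nabla f_\epsilon=(\nabla f)\ast \rho_\epsilon$ inherits the $\beta$-Lipschitz constant of $\nabla f$ by a direct estimate under the convolution integral. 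Theorem~\ref{Teo3.3} applied to $f_\epsilon$ then yields $1/\beta$-cocoercivity of $\nabla f_\epsilon$ on $\Omega_\epsilon$. For arbitrary $x,y\in \Omega$ I would pick $\epsilon$ small enough that $x,y\in \Omega_\epsilon$ and let $\epsilon\to 0^+$ in the resulting cocoercivity inequality; continuity of $\nabla f$ on $\Omega$ ensures $\nabla f_\epsilon(x)\to \nabla f(x)$ and $\nabla f_\epsilon(y)\to \nabla f(y)$, so the inequality persists in the limit.

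The obstacle I anticipate is concentrated in $(a)\Rightarrow(c)$: a direct descent-lemma argument on $\Omega$ fails because the natural candidate minimizer $x-\tfrac{1}{\beta}(\nabla f(x)-\nabla f(y))$ may leave $\Omega$, which is precisely what blocks a naive transcription of the Hilbert-space Baillon--Haddad proof to an open convex domain. The mollification detour circumvents this at the price of invoking the $C^2$ version Theorem~\ref{Teo3.3}; the only nontrivial verification is that $\rho_\epsilon$ preserves both convexity and the Lipschitz constant while the shrinkage of $\Omega$ to $\Omega_\epsilon$ is absorbed by choosing $\epsilon$ small relative to the chosen pair of test points.
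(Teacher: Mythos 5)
Your proof is correct, and your treatment of $(a)\Leftrightarrow(b)$ coincides with the paper's: both pass through Proposition~\ref{convex-char} and the identity $\overline{\nabla}^2\bigl(\tfrac{\beta}{2}\Vert\cdot\Vert^2-f\bigr)(x)=\beta I-\overline{\nabla}^2 f(x)$, reading off $0\preceq A\preceq \beta I$ as $\Vert A\Vert\le\beta$ for symmetric $A$. Where you genuinely diverge is the cocoercivity half. The paper stays entirely inside the generalized-Hessian calculus: it introduces $h=\tfrac{2}{\beta}f-\tfrac12\Vert\cdot\Vert^2$, translates convexity of $g$ into $\Vert B\Vert\le 1$ for all $B\in\overline{\nabla}^2 h(x)$, hence into $1$-Lipschitzness of $\nabla h=\tfrac{2}{\beta}\nabla f-\mathrm{Id}$, and invokes Example~\ref{ejemplo}(ii) ($T$ is $1$-cocoercive iff $2T-\mathrm{Id}$ is nonexpansive) to land on $(c)$; the implication $(c)\Rightarrow(a)$ then comes for free from the chain of equivalences. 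You instead prove $(c)\Rightarrow(a)$ directly by Cauchy--Schwarz and $(a)\Rightarrow(c)$ by mollifying on the eroded domain $\Omega_\epsilon$ (which is indeed open and convex, being an intersection of translates of $\Omega$), importing Theorem~\ref{Teo3.3} for the smooth regularizations, and passing to the limit using continuity of $\nabla f$; the verifications you flag (convexity and the Lipschitz constant survive convolution, and $x,y\in\Omega_\epsilon$ for $\epsilon$ small) all go through. The trade-off: the paper's route is self-contained within its nonsmooth second-order toolkit and delivers $(b)\Leftrightarrow(c)$ in one sweep, whereas yours outsources the hard implication to the known $C^2$ result of Bauschke--Combettes at the price of an approximation argument, and in exchange needs neither the auxiliary function $h$ nor the $2T-\mathrm{Id}$ characterization.
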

\begin{proof}
	Let us consider the functions 
	\begin{equation*}
	g(x):=\frac{1}{2}\Vert x\Vert^2-\frac{1}{\beta}f(x) \textrm{ and } h(x):=\frac{2}{\beta} f(x)-\frac{1}{2}\Vert x\Vert^2.
	\end{equation*}
	It is clear that {
	\begin{equation}\label{equivalencia-1}
	A\in  \overline{\nabla}^2 \left(\frac{f}{\beta}\right)(x) \Leftrightarrow B:=I-A\in \overline{\nabla}^2{g}(x).
	\end{equation}}
	On the one hand, {
	\begin{equation*}
	\begin{aligned}
	&\quad \,\,\,\, \nabla f \textrm{ is } \beta\textrm{-Lipschitz continuous}\\
	&\Leftrightarrow (\forall x\in \Omega)(\forall A\in   \overline{\nabla}^2 \left({f}/{\beta}\right)(x))\, \Vert A\Vert \leq 1 \hspace{3.6cm}  (\textrm{by Proposition  \ref{convex-char} (ii)})\\
	&\Leftrightarrow (\forall x\in \Omega)(\forall A\in   \overline{\nabla}^2 \left({f}/{\beta}\right)(x)) (\forall u\in \mathbb{R}^n)\, 0\leq \langle u,Au\rangle \leq \Vert u\Vert^2  \hspace{0.5cm} (\textrm{by Proposition \ref{convex-char} (i)})\\
	&\Leftrightarrow (\forall x\in \Omega)(\forall A\in \overline{\nabla}^2 \left({f}/{\beta}\right)(x)) (\forall u\in \mathbb{R}^n)\, 0\leq \Vert u\Vert^2-\langle u,Au\rangle   \\
	&\Leftrightarrow (\forall x\in \Omega)(\forall B\in \overline{\nabla}^2{g}(x)) (\forall u\in \mathbb{R}^n)\, 0\leq \langle u,Bu\rangle   \hspace{4.1cm}  (\textrm{by \eqref{equivalencia-1}}) \\
	&\Leftrightarrow g \textrm{ is convex}  \hspace{6.9cm}   (\textrm{ by Proposition  \ref{convex-char} (i)}),
	\end{aligned}
	\end{equation*}}
	which shows that $(a)$ is equivalent to $(b)$. \newline \noindent On the other hand,  {	\begin{equation*}
	\begin{aligned}
	&\quad \,\,\,\, g \textrm{ is convex},\\
	&\Leftrightarrow (\forall x\in \Omega)(\forall B\in \overline{\nabla}^2{g}(x)) (\forall u\in \mathbb{R}^n)\, 0\leq \langle u,Bu\rangle     \hspace{1.6cm} (\textrm{by Proposition  \ref{convex-char} (i)})\\
	&\Leftrightarrow (\forall x\in \Omega)(\forall A\in \overline{\nabla}^2 \left({f}/{\beta}\right)(x)) (\forall u\in \mathbb{R}^n)\, 0\leq \Vert u\Vert^2-\langle u,Au\rangle  \\
	&\Leftrightarrow (\forall x\in \Omega)(\forall A\in \overline{\nabla}^2 \left({f}/{\beta}\right)(x)) (\forall u\in \mathbb{R}^n)\, -\Vert u\Vert^2 \leq 2\langle u,Au\rangle -\Vert u\Vert^2 \leq \Vert u\Vert^2  & \\
	&\Leftrightarrow (\forall x\in \Omega)(\forall B\in \overline{\nabla}^2{h}(x))\, \Vert B\Vert \leq 1 \\
	&\Leftrightarrow  \textrm{ the map } x\mapsto \nabla h(x)=\frac{2}{\beta}\nabla f(x)-x \textrm{ is } 1\textrm{-Lipschitz} \hspace{0.6cm}  (\textrm{by Proposition \ref{convex-char} (ii)})\\
	&\Leftrightarrow  \nabla f \textrm{ is } 1/\beta\textrm{-cocoercive}  \hspace{5.6cm}  (\textrm{by Example \ref{ejemplo} (ii)}),
	\end{aligned}
	\end{equation*} }
	which proves that $(b)$ is equivalent to $(c)$.
\end{proof}
Now, we proceed to delete the hypothesis $f\in C^{1,+}(\Omega)$ from Lemma \ref{lemma1}. 
\begin{lemma}\label{finite}
	Let $\Omega$ be a nonempty open convex subset of $\mathbb{R}^n$, let $f\colon \Omega\to \mathbb{R}$ be a convex function and $\beta\in\, ]0,+\infty[$.
	Then the following are equivalent.
	\begin{enumerate}
		\item[(a)] $f$ is G\^{a}teaux differentiable on $\Omega$ and $\nabla f$ is $\beta$-Lipschitz continuous on $\Omega$.
		\item[(b)] the map $x\mapsto \frac{\beta}{2}\Vert x\Vert^2-f(x)$ is convex on $\Omega$.
		\item[(c)] $f$ is G\^{a}teaux differentiable on $\Omega$ and $\nabla f$ is $1/\beta$-cocoercive.
	\end{enumerate}
\end{lemma}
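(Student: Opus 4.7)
My plan is to reduce Lemma \ref{finite} to the already-proven Lemma \ref{lemma1} by verifying, for each of (a), (b), (c), that the condition forces $f\in C^{1,+}(\Omega)$; once this membership is in hand for all three, Lemma \ref{lemma1} immediately delivers the equivalence. Two of the three implications are essentially free: (a) has $\nabla f$ being $\beta$-Lipschitz built in, and (c) yields the same Lipschitz estimate by combining cocoercivity with Cauchy--Schwarz,
\[
\|\nabla f(x)-\nabla f(y)\|^2\le\beta\langle\nabla f(x)-\nabla f(y),x-y\rangle\le\beta\|\nabla f(x)-\nabla f(y)\|\,\|x-y\|.
\]
All the real work will therefore be in showing that (b) alone implies $f\in C^{1,+}(\Omega)$.

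For this main implication I would proceed in two stages. Stage~1 establishes that $f$ is G\^{a}teaux differentiable at every point of $\Omega$. Set $g:=\frac{\beta}{2}\|\cdot\|^2-f$, which is convex on $\Omega$ by hypothesis. Because $\Omega$ is open, the convex subdifferentials $\partial f(x)$ and $\partial g(x)$ are nonempty at every $x\in\Omega$. The sum rule \eqref{suma} then gives
\[
\partial f(x)+\partial g(x)=\partial(f+g)(x)=\{\beta x\},
\]
since $f+g=\frac{\beta}{2}\|\cdot\|^2$ is smooth with gradient $\beta x$. A sum of two nonempty sets can equal a singleton only if each summand is itself a singleton, so $\partial f(x)$ is a singleton at every $x\in\Omega$. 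This is precisely G\^{a}teaux differentiability of $f$, and the standard fact that a convex function on an open subset of $\mathbb{R}^n$ which is G\^{a}teaux differentiable at each point actually belongs to $C^1$ (via upper semicontinuity of $\partial f$ and local boundedness of convex functions) upgrades this to $f\in C^1(\Omega)$.

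Stage~2 upgrades $C^1(\Omega)$ to $C^{1,+}(\Omega)$ via mollification. Let $(\eta_\epsilon)_{\epsilon>0}$ be a family of smooth symmetric nonnegative mollifiers supported in $B(0,\epsilon)$, and set $\Omega_\epsilon:=\{x\in\Omega:\overline{B}(x,\epsilon)\subset\Omega\}$, which is open and convex (as an intersection of translates of $\Omega$). Then $f_\epsilon:=f*\eta_\epsilon$ is $C^\infty$ (in particular $C^{1,+}$) and convex on $\Omega_\epsilon$; because $\eta_\epsilon$ is symmetric, a direct computation shows that $g*\eta_\epsilon$ differs from $g_\epsilon:=\frac{\beta}{2}\|\cdot\|^2-f_\epsilon$ only by an additive constant, so $g_\epsilon$ is also convex on $\Omega_\epsilon$. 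Applying Lemma \ref{lemma1} to the pair $(f_\epsilon,\Omega_\epsilon)$ gives that $\nabla f_\epsilon$ is $\beta$-Lipschitz on $\Omega_\epsilon$. Since $\nabla f$ is continuous on $\Omega$, the identity $\nabla f_\epsilon=(\nabla f)*\eta_\epsilon$ on $\Omega_\epsilon$ yields $\nabla f_\epsilon\to\nabla f$ locally uniformly as $\epsilon\downarrow 0$; for any fixed $x,y\in\Omega$ we have $x,y\in\Omega_\epsilon$ for all small $\epsilon$, and passing to the limit in $\|\nabla f_\epsilon(x)-\nabla f_\epsilon(y)\|\le\beta\|x-y\|$ yields the same bound for $\nabla f$. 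Hence $f\in C^{1,+}(\Omega)$ and the desired equivalences follow from Lemma \ref{lemma1}.

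The main obstacle I expect is this Stage~2 step. A more direct attempt would be to extract the descent inequality $f(y)\le f(x)+\langle\nabla f(x),y-x\rangle+\frac{\beta}{2}\|y-x\|^2$ from (b) and then run the classical Baillon--Haddad trick with $z^\star:=y-\frac{1}{\beta}(\nabla f(y)-\nabla f(x))$ to obtain cocoercivity in one line, but this fails on a proper convex subset $\Omega$ because $z^\star$ need not lie in $\Omega$ and $\nabla f$ need not be bounded near $\partial\Omega$. The mollification detour sidesteps this by performing the $C^{1,+}$ reduction on the enlarged-interior sets $\Omega_\epsilon$, where Lemma \ref{lemma1} is directly available.
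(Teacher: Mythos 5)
Your proposal is correct, and at the top level it follows the same strategy as the paper: verify that each of (a), (b), (c) forces $f\in C^{1,+}(\Omega)$ and then hand everything off to Lemma \ref{lemma1}. Your Stage~1 — using the sum rule \eqref{suma} to write $\{\beta x\}=\partial f(x)+\partial g(x)$, concluding that $\partial f(x)$ is a singleton and hence that $f$ is (continuously, Fr\'echet) differentiable — is exactly the paper's argument. The genuine divergence is in upgrading $C^1$ to $C^{1,+}$ under (b). The paper stays intrinsic: it extracts from (b) the descent inequality $0\le D_f(x,y)\le\frac{\beta}{2}\Vert x-y\Vert^2$, fixes a ball $\bar x+\delta\mathbb{B}\subset\Omega$ on which $\nabla f$ is bounded by some $M$, and runs the classical Baillon--Haddad substitution $z=x+t(\nabla f(y)-\nabla f(x))$ with a step $t\in(0,\min\{\delta/(4M),2/\beta\})$ chosen small enough that $z$ stays in $\Omega$ — precisely the repair of the obstacle you flag in your closing paragraph. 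This yields local cocoercivity with a $t$-dependent constant, hence local Lipschitzness. Your mollification route instead transfers the problem to the smooth convex functions $f_\epsilon=f*\eta_\epsilon$ on the shrunken convex open sets $\Omega_\epsilon$, where Lemma \ref{lemma1} applies verbatim; the key observation that a symmetric mollifier changes $\frac{\beta}{2}\Vert\cdot\Vert^2$ only by an additive constant, so that convexity of $g$ passes to $\frac{\beta}{2}\Vert\cdot\Vert^2-f_\epsilon$, is correct, and letting $\epsilon\downarrow 0$ is justified since $\nabla f_\epsilon=(\nabla f)*\eta_\epsilon\to\nabla f$ pointwise once Stage~1 is in place. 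Your version has the advantage of delivering the sharp global constant $\beta$ for $\nabla f$ directly rather than only a local, non-explicit one, at the cost of importing standard mollifier machinery; the paper's version is self-contained within convex analysis. Both arguments are sound.
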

\begin{proof} 
	According to \cite[Theorem~2.2.1]{Borwein2010}, for {convex} functions defined on subsets of $\mathbb{R}^n$, G\^{a}teaux differentiability is equivalent to  Fr\'echet differentiablity. We proceed to show that any of the above conditions imply that $f\in C^{1,+}(\Omega)$. Indeed, it is clear that (a) and (c) implies that $f\in C^{1,+}(\Omega)$. To prove that (b) implies that $f\in C^{1,+}(\Omega)$, we follow some ideas from \cite{Byrne}. Let us define $h(x):= \frac{\beta}{2}\Vert x\Vert^2-f(x)${, for $x\in \Omega$}. Thus, 
	\begin{equation*}
	\begin{aligned}
	\frac{\beta}{2}\Vert x\Vert^2&=f(x)+h(x) & x\in \Omega,
	\end{aligned}
	\end{equation*}
	which implies that $\beta x=\partial f(x)+\partial h(x)$ for all $x\in \Omega$. Therefore, $\partial f(x)$ and $\partial h(x)$ are non-empty and contain a single element. Hence, by virtue of {\cite[Theorem~2.2.1]{Borwein2010}}, the function $f$ is G\^{a}teaux differentiable on $\Omega$ and, thus, Fr\'echet differentiable on $\Omega$ and continuously differentiable on $\Omega$ (see \cite[Theorem~2.2.2]{Borwein2010}).  It is not difficult to prove that (b) implies the following inequality: {
	\begin{equation*}
	\begin{aligned}
	\frac{\beta}{2}\Vert x-y\Vert^2 &\geq D_f(x,y):=f(x)-f(y)-\langle \nabla f(y),x-y\rangle \geq 0 & \textrm{ for all } x,y\in \Omega.
	\end{aligned}
	\end{equation*}
	Fix $y\in \Omega$ and define $d(x):=D_f(x,y)$. Then $\nabla d(x)=\nabla f(x)-\nabla f(y)$ and $D_f(z,x)=D_d(z,x)$ for all $z$ and $x$. Thus, we obtain 
	\begin{equation}\label{Bregman}
	\begin{aligned}
	\frac{\beta}{2}\Vert z-x\Vert^2 & \geq D_d(z,x)=d(z)-d(x)-\langle \nabla d(x),z-x\rangle & \textrm{ for all } z,x\in \Omega.
	\end{aligned}
	\end{equation} 
	Fix $\bar{x}\in \Omega$ and $\delta>0$ such that $\bar{x}+\delta \mathbb{B}\subset \Omega$  and $$M:=\sup_{z\in \bar{x}+\delta \mathbb{B}}\Vert \nabla f(z)\Vert<+\infty.$$ Let $x,y\in \bar{x}+\frac{\delta}{2} \mathbb{B}$ and $t\in (0,\min\{\frac{\delta}{4M},\frac{2}{\beta}\})$ such that $z:=x+t(\nabla f(y)-\nabla f(x))\in \bar{x}+\delta \mathbb{B}$.} Therefore, by taking $z=x+t(\nabla f(y)-\nabla f(x))$ in \eqref{Bregman} and using that $d(z)\geq 0$, we obtain  {
	\begin{equation*}
	D_f(x,y)\geq t\left(1-\frac{\beta t}{2}\right)\Vert \nabla f(x)-\nabla f(y)\Vert^2.
	\end{equation*}
	Analogously {(by taking $z:=y+t(\nabla f(x)-\nabla f(y))\in \bar{x}+\delta \mathbb{B}$)}, we get
	\begin{equation*}
	D_f(y,x)\geq t\left(1-\frac{\beta t}{2}\right)\Vert \nabla f(x)-\nabla f(y)\Vert^2.
	\end{equation*} }
	Thus, for all $x,y\in \bar{x}+\frac{\delta}{2}\mathbb{B}$
	\begin{equation*}
	\langle \nabla f(x)-\nabla f(y),x-y\rangle =D_f(x,y)+D_f(y,x) \geq t\left(2-\beta t\right)\Vert \nabla f(x)-\nabla f(y)\Vert^2,
	\end{equation*}
	which shows that $\nabla f$ is Lipschitz on $\bar{x}+\frac{\delta}{2}\mathbb{B}$. Therefore, $f\in C^{1,+}(\Omega)$
\end{proof}

\begin{remark} $D_f(x,y)$ is actually the Bregman distance between $x$ and $y$. Moreover, the equality $D_f(x,y)=D_d(z,x)$ is the so-called three points identity (see, e.g., \cite{Chen1993} or \cite[Lemma~3]{Bauschke2017-Bolte}).
\end{remark}

Now, we are ready to prove Theorem \ref{infinite}. \newline 
\textit{Proof of Theorem \ref{infinite}}\newline 
For a closed linear subspace $F\subset \H$, we denote by $\Vert \cdot\Vert_F$ the norm relative to $F$. We recall that 
$$
\Vert x\Vert_F=\sup_{h\in \mathbb{B}\cap F}\langle x,h\rangle, \text{ for all }x\in F.
$$

\noindent $(a) \Rightarrow  (b)$: Let $x,y\in \Omega$ and define $F:=\operatorname{span}\{x,y\}$. We observe that $\left(F,\langle \cdot,\cdot\rangle \right)$ is a finite dimensional Hilbert space. Thus the restriction of $f$ to $F$, $ \fF$, is G\^{a}teaux differentiable in $F$ and for all {$a,b\in \Omega \cap F$ and $h\in F$}
\begin{equation*}
\left\langle  \nabla \fF(a)-\nabla \fF(b),h \right\rangle =\left\langle  \nabla f(a)-\nabla f(b),h \right\rangle.
\end{equation*}
Hence, {for all $a,b\in \Omega \cap F$ and $h\in F$}
\begin{equation*}
\begin{aligned}
\Vert \nabla \fF(a)-\nabla \fF(b)\Vert_{F } &=\sup_{h\in \mathbb{B}\cap F}\left\langle \nabla \fF (a)-\nabla \fF (b),h\right\rangle \\
&\leq \Vert \nabla f(a)-\nabla f(b)\Vert_{\H  } \\
&\leq \beta \Vert a-b\Vert,
\end{aligned}
\end{equation*}
which shows that {$\nabla \fF$} is $\beta$-Lipschitz on $\Omega\cap F$. Therefore, according to Lemma \ref{finite}, the map $$x\mapsto h(x):=\frac{\beta}{2}\Vert x\Vert^2-\fF(x),$$ is convex on $\Omega\cap F$. In particular, for all $\lambda\in [0,1]$
$$
h(\lambda x+(1-\lambda)y)\leq \lambda h(x)+(1-\lambda)h(y).
$$
Since $x,y$ are arbitrary, it follows that the map $x\mapsto \frac{\beta}{2}\Vert x\Vert^2-f(x)$ is convex on $\Omega$.\newline
%%%%%
\noindent $(b) \Rightarrow  (a)$: We first observe that $x\mapsto h(x):=\frac{\beta}{2}\Vert x\Vert^2-f(x)$ is convex (with finite values) and for all $x\in \Omega$
$$
\frac{\beta}{2}\Vert x\Vert^2=f(x)+{h(x)}.
$$ 
Hence, by virtue of \eqref{suma}, for all $x\in \Omega$
$$
\beta x=\partial f(x)+\partial h(x),
$$
which implies that $\partial f(x)$ and $\partial h(x)$ are non-empty and contain a single element. Therefore, according to \cite[Corollary~4.2.5]{Borwein2010}, the function $f$ and $h$ are G\^{a}teaux differentiable on $\Omega$. Thus, if $F\subset \H$ is finite dimensional, then $h|_F$ is convex on {$\Omega\cap F$}. Hence, by virtue of Lemma \ref{finite}, {$\nabla \fF$} is $\beta$-Lispchitz on {$\Omega\cap F$, i.e., for all $x,y\in \Omega\cap F$}
\begin{equation}\label{asterisco}
\sup_{h\in \mathbb{B}\cap F}\langle \nabla f(x)-\nabla f(y),h \rangle =\Vert \nabla \fF(x)-\nabla \fF(y)\Vert_{F}\leq \beta \Vert x-y\Vert.
\end{equation} {
Let us consider
$$
\mathcal{F}_{x,y}:=\{ F \subset \H \colon F \textrm{ is a linear subspace of }\H \textrm{ with } x,y\in \Omega\cap F \textrm{ and } \operatorname{dim}F<+\infty \}.
$$}
Hence, since \eqref{asterisco} holds for any $F\subset \H$ finite dimensional, we obtain
\begin{equation}\label{asterisco2}
\begin{aligned}
\sup_{F\in \mathcal{F}_{x,y}} \Vert \nabla \fF(x)-\nabla \fF(y)\Vert_{F}& =\sup_{F\in \mathcal{F}_{x,y}} \sup_{h\in \mathbb{B}\cap F} \langle  \nabla f(x)-\nabla f(y),h\rangle \\
&=\Vert \nabla f(x)-\nabla f(y)\Vert_{\H}.
\end{aligned}
\end{equation}
Therefore, by taking supremum in \eqref{asterisco}, we conclude that for all $x,y\in \Omega$
$$
\Vert \nabla f(x)-\nabla f(y)\Vert \leq \beta \Vert x-y\Vert,
$$
which proves $(a)$. \newline 

\noindent $(c) \Rightarrow  (a)$: It is straightforward. \newline
{
\noindent $(a) \Rightarrow  (c)$: Let $x,y\in \Omega$ and $F\in \mathcal{F}_{x,y}$. Then $\left(F,\langle \cdot,\cdot\rangle \right)$ is a Hilbert space and   the restriction of $f$ to $F$, $\fF$, is G\^{a}teaux differentiable in $F$. Moreover,  
\begin{equation*}
\|  \nabla \fF(x)-\nabla \fF(y)\|_{F} \leq\|  \nabla f(x)-\nabla f(y)\|_{\H} \leq  \beta \| x- y\| \text{ for all }x,y\in \Omega\cap F.
\end{equation*} 
This implies that $\nabla \fF$ is $\beta$-Lipschitz on $ \Omega\cap F$. Whence,  by Lemma \ref{finite}, we have that $\nabla \fF $ is $1/\beta$-cocoercive on $ \Omega\cap F$. This implies in particular  that for $x,y\in \Omega$ and $F\in \mathcal{F}_{x,y}$ the following inequality holds
\begin{equation*}
\begin{aligned}
\beta \langle \nabla f(x)-\nabla f(y),x-y\rangle =\beta \langle \nabla \fF(x)-\nabla \fF(y),x-y\rangle 
&\geq \Vert \nabla f(x)-\nabla f(y)\Vert_{F}.
\end{aligned}
\end{equation*}
Since the above inequality is valid for all $F \in \mathcal{F}_{x,y}$, we can use again \eqref{asterisco2} to conclude that 
\begin{equation*}
\begin{aligned}
\beta \langle \nabla f(x)-\nabla f(y),x-y\rangle 
&\geq \Vert \nabla f(x)-\nabla f(y)\Vert_{\H}.
\end{aligned}
\end{equation*}

Hence, due to the fact that $x,y\in \Omega$ are arbitrary, we obtain that $\nabla f$ is $1/\beta$-cocoercive, which shows the equivalence between $(a)$, $(b)$ and $(c)$.

Moreover, it follows from $(a)$ (which is equivalent to $(b)$ and $(c)$) that $f$ is  G\^{a}teaux differentiable with $x\mapsto \nabla f(x)$ Lipchitz (and thus continuous). Hence, due to the \v{S}mulian's theorem (see, e.g., \cite[Theorem~4.2.10]{Borwein2010}), we get that $f$ is Fr\'echet differentiable on $\Omega$.  Therefore, any of the conditions $(a)$, $(b)$ and $(c)$ implies that $f\in C^{1,+}(\Omega)$, which ends the proof. \qed

}

\section{Conclusions  and final remarks}
In this paper, we have studied the Baillon-Haddad theorem for G\^{a}teaux differentiable convex functions defined on open convex sets of arbitrary Hilbert spaces. 
Our approach consists in the use of techniques from convex analysis, variational analysis, and nonsmooth analysis in conjunction with finite dimensional reductions. This paper improves known results in the literature and, in particular, gives a characterization of $C^{1,+}$ convex functions in terms of local cocoercitivity.
 We hope that the results of this paper shed light into the study of optimization algorithms that use local information of the objective function.

 \begin{acknowledgements}
The authors wish to thank the referees for providing several helpful suggestions. %The research of the second author was supported by CONICYT-PCHA - Doctorado Nacional 2013 - 21130676.
\end{acknowledgements}

\bibliographystyle{plain}
\bibliography{references}

\begin{thebibliography}{10}

\bibitem{Attouch2015}
B.~Abbas and H.~Attouch.
\newblock Dynamical systems and forward-backward algorithms associated with the
  sum of a convex subdifferential and a monotone cocoercive operator.
\newblock {\em Optimization}, 64(10):2223--2252, 2015.

\bibitem{Briceno2010}
H.~Attouch, L.~Brice\~{n}o Arias, and P.-L. Combettes.
\newblock A parallel splitting method for coupled monotone inclusions.
\newblock {\em SIAM J. Control Optim.}, 48(5):3246--3270, 200910.

\bibitem{Baillon1977}
J.-B. Baillon and G.~Haddad.
\newblock Quelques propri{\'e}t{\'e}s des op{\'e}rateurs angle-born{\'e}s et
  $n$-cycliquement monotones.
\newblock {\em Israel Journal of Mathematics}, 26(2):137--150, 1977.

\bibitem{Bauschke2017-Bolte}
H.-H. Bauschke, J.~Bolte, and M.~Teboulle.
\newblock A descent lemma beyond {L}ipschitz gradient continuity: First-order
  methods revisited and applications.
\newblock {\em Math. Oper. Res.}, 42(2):330--348, 2017.

\bibitem{BC2010}
H.-H. Bauschke and P.-L. Combettes.
\newblock The {B}aillon-{H}addad theorem revisited.
\newblock {\em J. Convex Anal.}, 17(3-4):781--787, 2010.

\bibitem{BC2017}
H.-H. Bauschke and P.-L. Combettes.
\newblock {\em Convex analysis and monotone operator theory in {H}ilbert
  spaces}.
\newblock CMS Books Math./Ouvrages Math. SMC. Springer, Cham, second edition,
  2017.

\bibitem{Bot2017}
R.~I. Bo\c{t} and E.~R. Csetnek.
\newblock A dynamical system associated with the fixed points set of a
  nonexpansive operator.
\newblock {\em J. Dynam. Differential Equations}, 29(1):155--168, 2017.

\bibitem{Borwein2010}
J.~Borwein and J.~Vanderwerff.
\newblock {\em Convex functions: constructions, characterizations and
  counterexamples}, volume 109 of {\em Encyclopedia Math. Appl.}
\newblock Cambridge University Press, Cambridge, 2010.

\bibitem{Byrne}
C.~Byrne.
\newblock On a generalized {B}aillon-{H}addad theorem for convex functions on
  {H}ilbert space.
\newblock {\em J. Convex Anal.}, 22(4):963--967, 2015.

\bibitem{Chen1993}
G.~Chen and M.~Teboulle.
\newblock Convergence analysis of a proximal-like minimization algorithm using
  bregman functions.
\newblock {\em SIAM J. Optim.}, 3(3):538--543, 1993.

\bibitem{Combettes2004}
P.-L. Combettes.
\newblock Solving monotone inclusions via compositions of nonexpansive averaged
  operators.
\newblock {\em Optimization}, 53(5-6):475--504, 2004.

\bibitem{Contreras2018}
A.~Contreras and J.~Peypouquet.
\newblock Asymptotic equivalence of evolution equations governed by cocoercive
  operators and their forward discretizations.
\newblock {\em J. Optim. Theory Appl.}, 2018.

\bibitem{HU1984}
J.-B. Hiriart-Urruty, J.-J. Strodiot, and V.~H. Nguyen.
\newblock Generalized {H}essian matrix and second-order optimality conditions
  for problems with {$C^{1,1}$} data.
\newblock {\em Appl. Math. Optim.}, 11(1):43--56, 1984.

\bibitem{Mordukhovich2006}
B.S. Mordukhovich.
\newblock {\em Variational analysis and generalized differentiation. {I}},
  volume 330 of {\em Grundlehren Math. Wiss.}
\newblock Springer-Verlag, Berlin, 2006.
\newblock Corrected, 2nd printing 2013.

\bibitem{Rockafellar}
R.-T. Rockafellar and R.~Wets.
\newblock {\em Variational analysis}, volume 317 of {\em Grundlehren Math.
  Wiss.}
\newblock Springer-Verlag, Berlin, 1998.
\newblock Corrected 3rd printing 2009.

\bibitem{Tseng1991}
P.~Tseng.
\newblock Applications of a splitting algorithm to decomposition in convex
  programming and variational inequalities.
\newblock {\em SIAM J. Control Optim.}, 29(1):119--138, 1991.

\bibitem{Zhu-Marcotte}
D.L. Zhu and P.~Marcotte.
\newblock Co-coercivity and its role in the convergence of iterative schemes
  for solving variational inequalities.
\newblock {\em SIAM J. Optim.}, 6(3):714--726, 1996.

\end{thebibliography}
\end{document}